\newcommand\R{{\mathbb R}}
\newcommand\Z{{\mathbb Z}}
\newcommand{\itg}{\int \limits}
\newcommand{\ee}{{\rm e}}
\newcommand\grad{{\rm grad}\,}
\newcommand\erf{{\rm erf}\,}
\newcommand\ddiv{{\rm div}\,}
\newcommand\e{\varepsilon}
\newcommand\eps{\varepsilon}
\newcommand\h{\eta}
\newcommand\x{\xi}
\newcommand{\cD}{\mathcal D}
\newcommand{\cH}{\mathcal H}
\newcommand{\cL}{\mathcal L}
\newcommand{\cM}{\mathcal M}
\newcommand{\cN}{\mathcal N}
\newcommand{\cO}{\mathcal O}
\newcommand{\cP}{\mathcal P}
\newcommand{\cQ}{\mathcal Q}
\newcommand{\cR}{\mathcal R}
\newcommand\bx{{\mathbf x}}
\newcommand\by{{\mathbf y}}
\newcommand\bm{{\mathbf m}}
\newcommand\bk{{\mathbf k}}
\newcommand\bu{{\mathbf u}}
\newcommand\ff{{\mathbf f}}
\newcommand\bg{{\mathbf g}}
\newtheorem{thm}{Theorem}[section]
\title{Approximation of solutions to non-stationary Stokes system}
\begin{document}

\author{ Flavia Lanzara \thanks{Department of Mathematics, 
Sapienza University of Rome, 
Piazzale Aldo Moro 2, 00185 Rome, Italy. 
{\it email:} \texttt{\rm lanzara\symbol{'100}mat.uniroma1.it}}
\and Vladimir Maz'ya \thanks{Department of Mathematics, University of
Link\"oping,  581 83 Link\"oping, Sweden.} \thanks{RUDN University, 6 Miklukho-Maklay St, Moscow, 117198, Russia. 
{\it email:} \texttt{\rm vlmaz\symbol{'100}mai.liu.se}}
\and Gunther Schmidt \thanks{Berlin, Germany.}
}

\date{}
\maketitle
 
{\bf Abstract:} We propose a fast method for high order approximations of the solution of the
Cauchy problem for the linear non-stationary Stokes system in \(\R^3\) in the unknown velocity \(\bu\) and  kinematic pressure \(P\). The density \(\bf f(\bx,t)\) and the divergence vector-free initial value \({\bg }(\bx)\) are smooth and rapidly decreasing as \(|\bx|\) tends to infinity. 
We construct the vector \(\bu\) in the form \(\bu=\bu_1+\bu_2\) where \(\bu_1\) solves a system of homogeneous heat equations and \(\bu_2\) solves a system of non-homogeneous heat equations with right-hand side \({\bf f}-\nabla P\).
Moreover  \(P=-\cL( \nabla\cdot \bf f)\) where \(\cL\) denotes the harmonic potential.
 Fast semi-analytic cubature formulas for computing the harmonic potential  and the solution of the heat equation based on the approximation of the data by functions with analitically known potentials are considered. In addition, the gradient \(\nabla P\) can be approximated by the gradient of the cubature of \(P\), which is a semi-analytic formula too. We derive fast and accurate high order formulas for the approximation of \(\bu_1,\bu_2\), \(P\) and \(\nabla P\). The accuracy of the method and the convergence order \(2,4,6\) and \(8\) are confirmed by numerical experiments.

{\bf Keywords:} approximate approximations,  Stokes system, harmonic equation, heat equation.

\section{Introduction}

In the present paper we describe a fast method for the numerical solution of the non-stationary Stokes system
\begin{eqnarray}\label{eq1}
\frac{\partial \bu}{\partial t} -\nu \Delta \bu+\nabla  P=\ff \\\label{eq2}
\nabla\cdot \bu=0
\end{eqnarray}
with the initial condition
\begin{eqnarray}
\label{eq3}
\bu(\bx,0)=\bg (\bx) 
\end{eqnarray}
for \((\bx,t)\in\R^3\times\R_+\) with \(\R_+=[0,\infty)\). 
Here \(\nabla=\{\partial_{x_1},\partial_{x_2},\partial_{x_3}\}\) so that \(\nabla P=\grad P\) and \(\nabla\cdot \bu=\ddiv \bu\)\,.
Equations {\eqref{eq1}-\eqref{eq3}} are solved for an unknown velocity vector { field} \(\bu(\bx,t)=(u_1(\bx,t),u_2(\bx,t),u_3(\bx,t))\)  and kinematic pressure \(P(\bx,t)\), defined at all points \(\bx\in\R^3\) and time \(t\geq 0\).  Here the viscosity \(\nu\) is a positive coefficient,
the data \(\ff(\bx,t)=(f_1(\bx,t),f_2(\bx,t),f_3(\bx,t))\) and \(\bg(\bx)=(g_1(\bx),g_2(\bx),g_3(\bx))\) are smooth and sufficiently rapidly decreasing as \(|\bx|\to \infty\) and \(\bg\) is a divergence-free vector field, { i.e. \(\nabla\cdot \bg=0\).}
The solution of the Cauchy problem {\eqref{eq1}-\eqref{eq3}} for smooth functions \(\ff\) and \(\bg\) decreasing at infinity is of the form 
\[
\bu (\bx,t)=\bu_1 (\bx,t)+\bu_2 (\bx,t),
\]
where the vector \(\bu_1\) is the solution of the homogeneous heat equations
\begin{equation}\label{heat}
\partial_t \bu_1-\nu \Delta \bu_1=0,\quad
\bu_1(\bx,0)=\bg(\bx)
\end{equation}
and the { pair} \((\bu_2,P)\)  is the solution of the Cauchy problem
\begin{equation}\label{heat2}
\partial_t \bu_2-\nu \Delta \bu_2+\nabla P=\ff,\quad
\nabla\cdot \bu_2=0,\quad \bu_2(\bx,0)=0\,.
\end{equation}
By the unique solvability of the Cauchy problem for the heat equation, from the condition \(\nabla\cdot \bg=0\) it follows that \(\nabla\cdot \bu_1=0\) for all \(t>0\). 
The { pair} \((\bu_1,\bu_2)\) { and \(P\)} have the form
\begin{eqnarray}\notag
\bu_1(\bx,t)&=&(\cP \bg)(\bx,t):=\itg_{\R^3} \Gamma(\bx-\by,t)\bg(\by)d\by
\\\notag
\bu_2(\bx,t)&=&(\mathbb T\, \ff)(\bx,t):=\itg_0^t \itg_{\R^3} \mathbf T(\bx-\by,t-\tau)\ff(\by,\tau)d\by d\tau
\\\label{Press}
P(\bx,t)&=&\itg_{\R^3}\nabla E(\bx-\by)\cdot\ff(\by,t)d\by
\end{eqnarray}
(cf. \cite[p.93]{La} or \cite[p.343]{So}, see also \cite{TG}), where
\[
\Gamma(\bx,t)=\frac{\ee^{-|\bx|^2/(4\nu t)}}{(4\pi\nu t)^{3/2}}
\]
is the fundamental solution of the heat equation; 
\[
E(\bx)=-\frac{1}{4\pi |\bx|}\] 
is the fundamental solution of the Laplace equation, hence
\[
\nabla E(\bx)= \frac{1}{4\pi} \frac{\bx}{|\bx|^3};
\]
the matrix \(\mathbf T=\{T_{ij}\}_{i,j=1,2,3}\) is the fundamental solution of the non-stationary Stokes system (the Oseen tensor)
\begin{multline*}
T_{ij}(\bx,t)=\Gamma(\bx,t)\delta_{ij}+\frac{1}{4\pi} \frac{\partial^2}{\partial x_j\partial x_j}\itg_{\R^3} \frac{\Gamma(\bx-\by,t)}{|\by|}d\by\\=
\Gamma(\bx,t)\delta_{ij}+\frac{1}{4\pi} \frac{\partial^2}{\partial x_j\partial x_j}\frac{ 1}{|\bx|}\erf\left(\frac{|\bx|}{2\sqrt{\nu t}}\right)\,
\end{multline*}
where \(\erf(r)=\frac{2}{\sqrt{\pi}}\int_0^r \ee^{-x^2}dx\).

The Stokes equations are the linearized form of the Navier-Stokes equations and they model the simplest incompressible flow problems (cf.\cite{GG}, \cite{GS}): the convection term is neglected, hence the arising model is linear. Thus the difficult is the coupling of velocity \(\bu\) and pressure \(P\).
A great deal of work has been done for the numerical solution of the Stokes and the Navier-Stokes equations, based on the  use of finite element, finite-difference or  finite-volume methods (cf., e.g.,  \cite{GQS},  \cite{GP}, \cite{JV},  \cite{RR}, \cite{RT}). 
Here we propose a fast method for the approximations of \(\bu_1,\bu_2\) and \(P\) in the framework of {\it approximate approximations }(\cite{MSbook}), which provides very efficient high order approximations (\cite{LS2020}).

The outline of the paper is the following. In section \ref{sec1} we consider the approximations of {  \(\bu_1=(u_{11},u_{12},u_{13})\), solution of \eqref{heat}.}  Our method, proposed in \cite{LMS2016} for \(n\)-dimensional parabolic problems, consists in approximating the functions { \(\bg=(g_1,g_2,g_3)\)} via the basis functions introduced by approximate approximations, which are product of Gaussians and special polynomials. The action of the Poisson integral \(\cP\) applied to the basis functions admits a separated representation (also tensor product representation), i.e., it is represented as product of functions depending on one  of the variables. Then a separated representation of \(\bg\) provides a separated representation of the potential and the resulting approximations formulas are very fast because only one-dimensional operations are used.\\
In section \ref{sec2} we consider the approximation of the pressure \(P\) { given in \eqref{Press}} and its gradient \(\nabla P\). For fixed \(t>0\), keeping in mind \eqref{eq2}, the divergence operator applied to equation \eqref{eq1} gives
\[
-\Delta P=F
\]
with \(F(\bx,t)=-\nabla\cdot\,\ff(\bx,t)=-\sum_{j=1}^3 \partial_{x_j} f_j(\bx,t)\). The solution \(P(\bx,t)\) with \(P(\bx,t)\to 0\) as \(|\bx|\to \infty\) is given by
\begin{equation}\label{harmonic}
P(\bx,t)=\cL F(\bx,t)=-\frac{1}{4\pi}\itg_{\R^3} \frac{F(\by,t)}{|\bx-\by|}d\by=
\frac{1}{4\pi} \sum_{j=1}^3 \itg_{\R^3} \frac{\partial_{y_j} f_j(\by,t)}{|\bx-\by|}d\by
\end{equation}
where \(\cL\) denotes the harmonic potential. 
In \cite{LMS2011}  fast semi-analytic cubature formulas for computing \(\cL F\) were constructed which are based on the approximation of the density \(F(\cdot,t)\) by functions with analytically known harmonic potentials. 
The gradient \(\nabla P=\nabla \cL F\) { can be approximated} by the gradient of the cubature of \(\cL F\), which is a semi-analytic formula, too. If \(F(\cdot,t)\) admits a separated representation then we derive tensor product representations of \(P\) and \(\nabla P\), which admits efficient one-dimensional operations.

 In section \ref{sec3} we describe the approximation of { \(\bu_2=(u_{21},u_{22},u_{23})\)}, which satisfy
 the non-homo\-geneous heat equations
\begin{eqnarray}\label{eq2ter}
\frac{\partial \bu_{2}}{\partial t} -\nu \Delta \bu_{2}=\Phi,\quad 
\bu_{2}(\bx,0)=0,\quad 
\bx\in\R^3,\quad t>0,
\end{eqnarray}
where \(\Phi=\ff-\nabla P\) and \(\nabla P=\nabla \cL F\) is computed in section \ref{sec2}. Since \(\nabla \cdot \Phi=0\) then the solution of \eqref{eq2ter} satisfies the condition \(\nabla \cdot \bu_2=0\). The solution of \eqref{eq2ter} admits the representation 
 \[
\bu_{2}(\bx,t)=\cH \Phi(\bx,t)=(\cH \varphi_1, \cH\varphi_2,\cH \varphi_3),\quad \Phi=(\varphi_1,\varphi_2,\varphi_3)
\]
with
\begin{equation}\label{potential2}
\begin{split}
\cH \varphi_j(\bx,t)
&=\itg_0^t \frac{ds}{(4 \pi\,\nu s)^{3/2}}\itg_{\R^3} \ee^{-\frac{|\bx-\by|^2}{4\nu s}} \, \varphi_j(\by,t-s)d\by \\
&=\itg_0^t   (\cP\, \varphi_j(\cdot,s))(\bx,t-s) ds,\quad j=1,2,3
\end{split}
\end{equation}
({cf.} \cite{evans}). The cubature formula of \eqref{potential2}  proposed in \cite{LMS2016} is based on replacing the density \(\varphi_j\) 
 by approximate quasi-interpolants on the rectangular grids . 
The action of \(\cH\) on the basis functions allows for one-dimensional integral representations with separated integrands. This
construction, combined with an accurate quadrature rule as suggested in \cite{Mo} and a separated representation of the density \(\varphi_j\), provides a separated representation of the integral operator \eqref{potential2}. 

We derive fast and
accurate formulas for the approximation of \((\bu_1,\bu_2,P)\) of an arbitrary high order.
In section \ref{sec4} the accuracy of the method and the convergence orders 2,4, 6  and 8 are confirmed by numerical experiments.

\section{Approximation of \(\bu_1\)}\label{sec1}

In this section we consider  the approximation of \(\bu_1=(u_{11},u_{12},u_{13})\). Each component \(u_{1j},j=1,2,3\), is  solution of the  homogeneous heat equation
\begin{eqnarray*}\label{eq2bis}
\frac{\partial u_{1j}}{\partial t} -\nu \Delta u_j=0,\quad
u_{1j}(\bx,0)=g_j(\bx)\quad \bx\in \R^3,\quad t>0.
\end{eqnarray*}
The solution is given by
\[
u_{1j}(\bx,t)=\cP g_j(\bx,t)\,,
\]
where \(\cP g_j\) denotes the Poisson integral
\begin{equation*}\label{poisson}
\cP g_j(\bx,t)
=
\itg_{\R^3}\Gamma(\bx-\by,t)\,g_j(\by)d\by = \frac{1}{(4\nu\pi\,t)^{3/2}}\itg_{\R^3}\ee^{-|\bx-\by|^2/(4\nu t)}\,g_j(\by)d\by \,.
\end{equation*}
We approximate  the {functions  $g_j, j=1,2,3$,} by  the approximate quasi-interpolants  
\begin{equation}\label{quasiintg}
\cN_h g_{j}(\bx) =\frac{1}{\cD^{3/2}}\sum_{\bm\in\Z^3} {g_j}(h \bm)\, 
 {\widetilde\h_{2M}} \left(\frac{\bx-h \bm}{h \sqrt{\cD}}\right)
\end{equation}
with the basis functions
\begin{equation}\label{basis}
\widetilde\h_{2M}(\bx)=\prod_{j=1}^3{\h}_{2M}(x_j);
\quad
{\h}_{2M}(x)=\frac{(-1)^{M-1}}{2^{2M-1}\sqrt{\pi} (M-1)!}\frac{H_{2M-1}(x) \ee^{-x^2}}{x}\,,
\end{equation}
where $H_k$ are the Hermite polynomials
\begin{equation*}\label{hermite}
H_k(x)=(-1)^k \ee^{x^2} \left( \frac{d}{dx}\right)^k \ee^{-x^2}.
\end{equation*}
Here \(\cD\) is a positive fixed parameter and \(h\) is the mesh size.
The function \(\widetilde{\h}_{2M}\) satisfies the moment conditions of order  $2M$  
\begin{equation*}\label{moment}
\itg_{\R^3} \bx^\alpha \widetilde{\h}_{2M}(\bx) d\bx=\delta_{0,\alpha},\qquad 0\leq |\alpha|<N\, ,
\end{equation*}
 and the quasi-interpolant \eqref{quasiintg}
provides an approximation of $g_j$ with the general asymptotic error \(\cO(h^{2M}+\e)\) (cf. \cite{MSbook}). 
The saturation error \(\eps\) does not converge to zero as \(h\to 0 \), however it can be made arbitrary small if the parameter \(\cD\) is sufficiently large.
Then the sum
\begin{equation*}\label{PM}
(\cP_Mg_j )(\bx,t):=(\cP \cN_h g_{j})(\bx,t)
= \frac{1}{\cD^{3/2}}\sum_{{\bm\in\Z^3}} {g_j}(h \bm) \, \cP\widetilde\h_{2M} \,
\left(\frac{\bx-h\bm}{h\sqrt{\cD}},\frac{t}{h^2{\cD}}\right)
\end{equation*}
provides  an approximation of  $\cP\,g_j(\bx,t)$ with the error \(\cO((h\sqrt{\cD})^{2M}+\e)\). Since the Poisson integral is a smoothing operator, one can prove that also the saturation error tends to \(0\) as \(h\to 0\)  and  the approximate solution \(\cP_Mg_j \) converges for any fixed \(t>0\) with order \(\cO(h^{2M})\) to $\cP\,g_j$ (\cite[Theorem 6.1]{MSbook}).
\begin{thm}(\cite[Theorem 3.1]{LMS2016})\label{3.1}
Let \(M\geq 1\). The Poisson integral applied to the generating function \(\widetilde\h_{2M}\) in \eqref{basis} can be written as
\[
\cP\widetilde\h_{2M} (\bx,t)=\frac{\ee^{-\frac{|\bx|^2}{1+4\nu t}}}{\pi^{3/2}}\prod_{j=1}^3 \cQ_M(x_j,4\nu t)\,.
\]
$\cQ_M(x,r)\) is a  polynomial in $x$
of degree $2M-2$ of the form
\begin{equation}\label{QM}
\cQ_M(x,r)=
\sum_{k=0}^{M-1} \frac{1}{(1+ r)^{k+1/2}}
\frac{(-1)^k}{4^k k!}H_{2k}\Big(\frac{x}{\sqrt{1+r}}\Big)\,.
\end{equation}
\end{thm}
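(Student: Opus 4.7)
The plan is to exploit the tensor product structure of both the heat kernel and the basis function \(\widetilde\h_{2M}\), reducing everything to a one-dimensional Gaussian convolution, and then to evaluate that convolution by means of generating functions for Hermite and Laguerre polynomials.

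First I would observe that \(\Gamma(\bx-\by,t)=\prod_{j=1}^{3}G_r(x_j-y_j)\) with \(G_r(u)=\ee^{-u^2/r}/\sqrt{\pi r}\) and \(r=4\nu t\), while \(\widetilde\h_{2M}\) is already a product of one-variable factors. Hence
\[
\cP\widetilde\h_{2M}(\bx,t)=\prod_{j=1}^{3}(G_r*\h_{2M})(x_j),
\]
so it suffices to prove the one-dimensional identity \((G_r*\h_{2M})(x)=\pi^{-1/2}\ee^{-x^2/(1+r)}\cQ_M(x,r)\).

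Next I would rewrite \(\h_{2M}\) as a Laguerre--Gaussian: the classical Hermite--Laguerre relation
\[
H_{2M-1}(y)=(-1)^{M-1}\,2^{2M-1}\,(M-1)!\,y\,L_{M-1}^{(1/2)}(y^2)
\]
turns \eqref{basis} into \(\h_{2M}(y)=\pi^{-1/2}L_{M-1}^{(1/2)}(y^2)\ee^{-y^2}\). The Laguerre generating function \(\sum_{n\ge 0}L_n^{(1/2)}(w)s^n=(1-s)^{-3/2}\ee^{-ws/(1-s)}\) then gives
\[
\sum_{M\ge 1}s^{M-1}\h_{2M}(y)=\frac{\ee^{-y^2/(1-s)}}{\sqrt{\pi}\,(1-s)^{3/2}}.
\]
Convolving both sides with \(G_r\) and completing the square in the resulting Gaussian integral yields
\[
\sum_{M\ge 1}s^{M-1}(G_r*\h_{2M})(x)=\frac{\ee^{-x^2/(r+1-s)}}{\sqrt{\pi}\,(1-s)\sqrt{r+1-s}}.
\]

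Finally I would extract the coefficient of \(s^{M-1}\) on the right side and match it with \(\pi^{-1/2}\ee^{-x^2/(1+r)}\cQ_M(x,r)\). Writing \(\ee^{-x^2/(r+1-s)}=\ee^{-x^2/(1+r)}\,\ee^{-x^2 s/((1+r)(r+1-s))}\) and expanding the second factor by the Laguerre generating function \(\sum_{k\ge 0}L_k^{(-1/2)}(w)t^k=(1-t)^{-1/2}\ee^{-wt/(1-t)}\) with \(w=x^2/(1+r)\) and \(t=s/(1+r)\), then multiplying by the geometric series \((1-s)^{-1}\), produces as the coefficient of \(s^{M-1}\) the partial sum \(\sum_{k=0}^{M-1}L_k^{(-1/2)}(x^2/(1+r))/(1+r)^{k+1/2}\). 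Converting via \(L_k^{(-1/2)}(z^2)=(-1)^k H_{2k}(z)/(4^k k!)\) recovers exactly \(\cQ_M(x,r)\) as in \eqref{QM}. The main obstacle is purely bookkeeping: aligning the substitutions in the two Laguerre generating functions so that powers of \((1+r)\) and \((r+1-s)\) combine correctly. Conceptually, the only non-trivial ingredients are two classical Laguerre generating functions together with the Hermite--Laguerre conversion, so no deep analysis is needed.
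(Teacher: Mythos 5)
Your argument is correct and complete. The tensor-product reduction is exactly right (\(\Gamma(\bx,t)=\prod_j \ee^{-x_j^2/r}/\sqrt{\pi r}\) with \(r=4\nu t\)), the conversion \(\h_{2M}(y)=\pi^{-1/2}L_{M-1}^{(1/2)}(y^2)\ee^{-y^2}\) follows from the stated Hermite--Laguerre relation and the definition \eqref{basis}, and I checked the algebra in your coefficient extraction: \(\tfrac{1}{r+1-s}-\tfrac{1}{1+r}=\tfrac{s}{(1+r)(r+1-s)}\), the substitution \(t=s/(1+r)\), \(w=x^2/(1+r)\) lines up with the \(L^{(-1/2)}\) generating function, the factor \((1-s)^{-1}\) correctly produces the partial sum \(\sum_{k=0}^{M-1}\), and \(L_k^{(-1/2)}(z^2)=(-1)^kH_{2k}(z)/(4^kk!)\) gives \eqref{QM}; the \(M=1\) case confirms the normalization. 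Note that this paper does not prove the theorem at all --- it is imported verbatim from \cite[Theorem 3.1]{LMS2016}, where the one-dimensional Gauss--Weierstrass transform of \(\h_{2M}\) is evaluated by a direct term-by-term computation on the Hermite expansion (equivalently, via the Fourier transform of \(\h_{2M}\), which is a truncated exponential times a Gaussian, multiplied by the heat multiplier \(\ee^{-4\pi^2\nu t\xi^2}\) and inverted). Your route through the two Laguerre generating functions is a genuine alternative: it trades the term-by-term Hermite integrals for a single closed-form Gaussian convolution of the generating series, which makes the appearance of the partial sum over \(k\le M-1\) (the \((1-s)^{-1}\) factor) transparent, at the cost of some bookkeeping with the substitutions. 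Two small points you may want to make explicit in a written version: the interchange of the sum over \(M\) with the convolution integral is justified by absolute convergence for \(|s|<1\) (or by treating everything as an identity of formal power series in \(s\), since each coefficient identity is what is actually needed), and the degree claim follows because the \(k=M-1\) term contributes \(H_{2M-2}(x/\sqrt{1+r})\), whose leading coefficient in \(x\) is nonzero.
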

From theorem \ref{3.1},  the sum  
\begin{equation}\label{app4}
(\cP_Mg_j )(\bx,t)
= \frac{1}{(\pi\cD)^{3/2}}\sum_{\bm\in\Z^3} {g_j}(h \bm)\ee^{-\frac{|\bx-h\bm|^2}{h^2\cD+4\nu t}}\prod_{i=1}^3 \cQ_M(\frac{x_i-hm_i}{h\sqrt{\cD}},4\nu \frac{t}{h^2{\cD}})
\end{equation}
is a semi-analytic cubature formula for \(\cP g_j\).
For example, for \(M=1\), 
\[
\cQ_1(x,r)=\frac{1}{\sqrt{1+r}}
\]
and then 
\begin{equation*}
(\cP_2 g_{j})(\bx,t)
= \frac{h^3}{\pi^{3/2}}\sum_{\bm\in\Z^3} {g_j}(h \bm)\frac{\ee^{-\frac{|\bx-h\bm|^2}{h^2\cD+4\nu t}}}{(h^2\cD+4\nu t)^{3/2}}.
\end{equation*}
The computation of \eqref{app4} is very efficient if  the functions {\(g_j(\bx),j=1,2,3,\)} allow a separated representation; that is, within a prescribed accuracy \(\e\), they can be represented as sum of products of univariate functions
\[
g_j(\bx)=\sum_{s=1}^S\prod_{i=1}^3 g_{j,i}^{(s)}(x_i)+\cO(\e)
\]
with suitable functions \(g_{j,i}^{(s)}\). Then, at the point of a uniform grid \(\{h\bk\}\),
\begin{equation}\label{hg}
(\cP g_{j})(h\bk,t)\approx \sum_{s=1}^S\prod_{i=1}^3 S_{j,s}^{(i)}(k_i,\frac{4\nu t}{h^2\cD})
\end{equation}
where \(S_{j,s}^{(i)}\) denotes the one-dimensional convolution
\[
S_{j,s}^{(i)}(k,t)={(\pi \cD)^{-1/2}}\sum_{m\in\Z} g_{j,i}^{(s)}(hm)\ee^{-\frac{(k-m)^2}{\cD(1+t)}} \cQ_M(\frac{k-m}{\sqrt{\cD}},t).
\]

\section{Approximation of \(P(\bx,t)\)}\label{sec2}

For fixed \(t>0\), we consider the approximation of \(P(\cdot,t)=\cL F(\cdot,t)\), given in \eqref{harmonic}.
We approximate \(F\) by the quasi-interpolant
\begin{equation}\label{quasiint}
\cM_{h} F(\bx,t)={\cD^{-3/2}}\sum_{\bm\in\Z^3}F(h\bm,t) \widetilde\eta_{2M}\left(\frac{\bx-h\bm}{h\sqrt{\cD}}\right)
\end{equation}
with the basis functions \eqref{basis}.
The quasi-interpolant \eqref{quasiint}
provides an approximation of $F$ with the general asymptotic error \(\cO(h^{2M}+\e)\) (cf. \cite[Theorem 4.2]{MSbook}). 
Then the sum
\[
\cL_M F(\bx,t):=\cL \cM_h F(\bx,t)= \frac{h^2}{\cD^{1/2}} \sum_{\bm\in\Z^3} F(h\bm,t) (\cL\widetilde\eta_{2M})\left(\frac{\bx-h\bm}{h\sqrt{\cD}}\right)\]
\[={-}
\frac{h^2}{\cD^{1/2}} \sum_{i=1}^3\sum_{\bm\in\Z^3} \partial_{x_i} f_i(h\bm,t) (\cL\widetilde\eta_{2M})\left(\frac{\bx-h\bm}{h\sqrt{\cD}}\right)
\]
is a semi-analytic cubature formula for \(P=\cL F\). 
Moreover, by the smoothing properties of the harmonic potential, the corresponding small saturation error {converges} with the rate \(h^2\) as \(h\to 0\). Hence, 
 for sufficiently smooth  functions, \(\cL_M F\) approximates \(\cL F\) with the error 
\(
\cO(h^{2M}+h^2\ee^{-\cD\pi^2})\,
\)
(\cite[Theorem 4.10]{MSbook}).Therefore, in numerical computations with \(\cD\geq 3\), \(\cL_M F(\bx,t)\) behaves like a high order cubature formula.\\
The function \(v= \cL(\prod_{j=1}^3{\h}_{2M})\) is solution of the equation
\begin{equation}\label{advecM}
-\Delta \, v = \prod_{j=1}^3 \eta_{2M}(x_j)\,,\quad \bx\in\R^3,\quad |v(\bx)|\to 0\quad as  \quad |\bx|\to \infty\,.
\end{equation}
\begin{thm}(\cite[Theorem 1]{LMS2014})
The solution of \eqref{advecM}  can be expressed by the one-dimensional integral
\begin{align} \label{zwint}
v(\bx)
=
\frac{1}{4\pi^{3/2}} \itg_0^\infty {\ee^{-\frac{|\bx|^2}{1+ r}}}\prod_{j=1}^3\cQ_M(x_j,r)dr\,,
\end{align}
where $\cQ_M(x,r)\) is defined in \eqref{QM}.
\end{thm}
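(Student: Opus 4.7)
The plan is to represent the harmonic potential as a time-integral of the heat semigroup, and then apply Theorem \ref{3.1} directly. The starting observation is the classical subordination identity: if $w(\bx,t)=(\cP_t F)(\bx)$ solves $\partial_t w = \Delta w$ with $w(\cdot,0)=F$ (taking $\nu=1$ for this step), and if $F$ is smooth and rapidly decaying so that $w(\cdot,t)\to 0$ as $t\to \infty$ in a suitable sense, then
\[
-\Delta \int_0^\infty w(\bx,t)\,dt = -\int_0^\infty \partial_t w(\bx,t)\,dt = F(\bx) - 0 = F(\bx),
\]
so that $\cL F(\bx) = \int_0^\infty (\cP_t F)(\bx)\,dt$. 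Setting $F = \widetilde\eta_{2M}$ in this identity converts the three-dimensional convolution defining $v$ into a one-dimensional integral over $t$.

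Next I would apply Theorem \ref{3.1} with $\nu = 1$ to evaluate the integrand explicitly:
\[
(\cP_t \widetilde\eta_{2M})(\bx) = \frac{\ee^{-|\bx|^2/(1+4t)}}{\pi^{3/2}} \prod_{j=1}^3 \cQ_M(x_j,4t).
\]
Substituting this into the subordination formula and then changing variables $r = 4t$, $dr = 4\,dt$, I obtain
\[
v(\bx) = \int_0^\infty (\cP_t \widetilde\eta_{2M})(\bx)\,dt = \frac{1}{4\pi^{3/2}} \int_0^\infty \ee^{-|\bx|^2/(1+r)} \prod_{j=1}^3 \cQ_M(x_j,r)\,dr,
\]
which is exactly \eqref{zwint}. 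The decay condition $|v(\bx)|\to 0$ as $|\bx|\to \infty$ is immediate from the formula because each factor $\ee^{-|\bx|^2/(1+r)}$ tends to $0$ uniformly on any bounded interval in $r$ and is uniformly bounded in $r$.

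The only point that requires care is justifying the subordination identity and the interchange of differentiation and integration, i.e.\ checking that the integral $\int_0^\infty (\cP_t \widetilde\eta_{2M})(\bx)\,dt$ converges absolutely and that $\cP_t\widetilde\eta_{2M}\to 0$ as $t\to\infty$. For this I would examine the asymptotics of the explicit integrand: from \eqref{QM}, as $r\to\infty$ each factor $\cQ_M(x_j,r)$ is dominated by its $k=0$ term and behaves like $(1+r)^{-1/2}$, so the integrand decays like $(1+r)^{-3/2}$, which is integrable at infinity; at $r=0$ the integrand reduces to a bounded polynomial times $\ee^{-|\bx|^2}$, so there is no issue near zero either. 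This absolute integrability justifies both Fubini and the interchange with $\Delta$, and it verifies by direct inspection that the right-hand side of \eqref{zwint} does solve \eqref{advecM} with the prescribed decay. The potential obstacle is purely this justification of the subordination identity in the specific functional-analytic setting used; once that is in hand, the computation itself is essentially a one-line substitution.
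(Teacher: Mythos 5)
Your argument is correct. The paper itself gives no proof of this theorem --- it is quoted from \cite[Theorem 1]{LMS2014} --- and your derivation via the subordination identity $\cL F=\int_0^\infty \cP_t F\,dt$ combined with Theorem \ref{3.1} (with $\nu=1$) and the substitution $r=4t$ is essentially the standard route used in that reference, where the kernel $1/(4\pi|\bx|)$ is written as a Gaussian integral over a parameter; the constants check out (e.g.\ for $M=1$ one recovers $\frac{1}{4\pi^{3/2}}\int_0^\infty \ee^{-|\bx|^2/(1+r)}(1+r)^{-3/2}dr$). Your integrability analysis, with the integrand decaying like $(1+r)^{-3/2}$ because the $k=0$ term dominates $\cQ_M$ at infinity, is exactly what justifies the interchange of $\Delta$ with the $r$-integral and the identification with the unique decaying solution of \eqref{advecM}.
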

For example, for \(M=1\), 
{ \[
\cQ_1(x,r)=\frac{1}{\sqrt{1+r}},\quad
\cL(\ee^{-|\cdot|^2})(\bx)=\frac{1}{4\pi^{3/2}} \itg_0^\infty \frac{\ee^{-|\bx|^2/(1+ r)}}{(1+r)^{3/2}} dr\,
\]
and
\[
P(\bx,t)\approx \cL_M F(\bx,t)=\frac{h^2\cD}{4(\pi \cD)^{3/2}} \sum_{\bm\in\Z^3} F(h\bm,t)\itg_0^\infty \frac{\ee^{-\frac{|\bx-h\bm|^2}{h^2\cD(1+r)}}}{(1+r)^{3/2}}dr
\]
}
provides a second order approximation formula.
\\
The gradient \(\nabla P=\nabla \cL F\) of the harmonic potential can be approximated by the gradient of the cubature of \(\cL F\), which is a semi-analytic formula, too. We have
\[\nabla P(\bx,t)\approx\nabla(\cL_M F)(\bx,t)=
\frac{h}{\cD}  \sum_{\bm\in\Z^3}F(h\bm,t)  \left(\nabla \cL(\prod_{j=1}^3{\h}_{2M}(x_j))\right)\left(\frac{\bx-h\bm}{h\sqrt{\cD}}\right)\,.
\]
Moreover, \(\nabla(\cL_M F)\) approximates \(\nabla(\cL F)\) with the error 
\(
\cO(h^{2M}+h\ee^{-\pi^2\cD})
\) (\cite[Theorem 4.11]{MSbook}.)\\
 The function \(v=\partial_{x_i} \cL(\prod_{j=1}^3{\h}_{2M})\), {\(i=1,2,3\)} is solution of the problem
\begin{equation}\label{advec2}
 -\Delta \, v = \partial_{x_i}\prod_{j=1}^3 \eta_{2M}(x_j)\,,\quad \bx\in\R^3,\quad |v(\bx)|\to 0\quad {\rm  as}  \quad |\bx|\to \infty.
\end{equation}
\begin{thm} 
The solution of \eqref{advec2} can be expressed by the one-dimensional integral
\begin{multline}\label{gradL}
\partial_{x_i} \cL(\prod_{j=1}^3{\h}_{2M})(\bx)=-\frac{x_i}{2\pi^{3/2}} \itg_0^\infty \frac{{\ee^{-\frac{|\bx|^2}{1+ r}}}}{1+r} \prod_{j=1}^3\cQ_M(x_j,r)dr\\+
\frac{1}{4\pi^{3/2}} \itg_0^\infty {\ee^{-\frac{|\bx|^2}{1+ r}}}  \cR_M(x_i,r) \prod_{\substack{j=1\\ j\neq i}}^{3}\cQ_M(x_j,r) dr\,,
\end{multline}
where $\cQ_M(x,r)\) is defined in \eqref{QM}, {\(\cR_1(x,r)=0\) and
\[
 \cR_M(x,r)=\partial_{x}\cQ_M(x,r)=
\sum_{k=1}^{M-1} \frac{1}{(1+ r)^{k+1}}
\frac{(-1)^k}{4^{k-1} (k-1)!}H_{2k-1}\Big(\frac{x}{\sqrt{1+r}}\Big), M>1.
\]
}
\end{thm}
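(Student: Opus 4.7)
The plan is to differentiate the one-dimensional integral representation of $\cL\bigl(\prod_{j=1}^{3}\eta_{2M}(x_j)\bigr)$ provided by the preceding theorem (equation \eqref{zwint}) under the integral sign with respect to $x_i$, and then to rearrange the result into the two integrals appearing in \eqref{gradL}.

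First I would note that, since $\cL$ is convolution with a kernel depending only on $\bx-\by$, it commutes with $\partial_{x_i}$ on sufficiently smooth decaying densities. Hence
\[
\partial_{x_i}\cL\Big(\prod_{j=1}^{3}\eta_{2M}(x_j)\Big)(\bx)
=\partial_{x_i}\Big(\frac{1}{4\pi^{3/2}}\int_{0}^{\infty} \ee^{-|\bx|^{2}/(1+r)}\prod_{j=1}^{3}\cQ_M(x_j,r)\,dr\Big).
\]
The integrand, for each fixed $\bx$, is smooth in $r$, decays like $(1+r)^{-3/2}$ at infinity (since $\cQ_M(x,r)=\cO((1+r)^{-1/2})$), and its $x_i$-derivative is dominated on compact $\bx$-neighbourhoods by an $r$-integrable function; this justifies differentiation under the integral sign.

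Next I would apply the product rule to the integrand. The exponential contributes
\[
\partial_{x_i}\ee^{-|\bx|^{2}/(1+r)}=-\frac{2x_i}{1+r}\,\ee^{-|\bx|^{2}/(1+r)},
\]
producing the first term in \eqref{gradL} after absorbing the factor $2$ into the prefactor. The product $\prod_{j=1}^{3}\cQ_M(x_j,r)$ contributes, by the single-variable product rule,
\[
\partial_{x_i}\prod_{j=1}^{3}\cQ_M(x_j,r)=\cR_M(x_i,r)\prod_{\substack{j=1\\ j\neq i}}^{3}\cQ_M(x_j,r),\qquad
\cR_M(x,r):=\partial_x\cQ_M(x,r),
\]
which gives the second term in \eqref{gradL}.

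Finally I would compute $\cR_M$ explicitly from \eqref{QM} using the Hermite identity $H_{n}'(y)=2nH_{n-1}(y)$ together with the chain rule factor $1/\sqrt{1+r}$. This yields
\[
\partial_{x}H_{2k}\Big(\frac{x}{\sqrt{1+r}}\Big)=\frac{4k}{\sqrt{1+r}}H_{2k-1}\Big(\frac{x}{\sqrt{1+r}}\Big),
\]
so the $k=0$ term of $\cQ_M$ disappears and the coefficient simplifies via $\tfrac{4k}{4^{k}k!}=\tfrac{1}{4^{k-1}(k-1)!}$, recovering exactly the formula for $\cR_M$ stated in the theorem (with $\cR_1=0$, as the sum is empty when $M=1$). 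The only genuine care needed is the justification of differentiation under the integral and of the commutation $\partial_{x_i}\cL=\cL\partial_{x_i}$; both are routine thanks to the Gaussian factor in $\eta_{2M}$ and the explicit $(1+r)^{-3/2}$ decay, so the proof is essentially a direct calculation from the preceding theorem.
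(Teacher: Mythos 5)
Your proof is correct and follows exactly the paper's route: the paper likewise obtains \eqref{gradL} by direct differentiation of \eqref{zwint} under the integral sign, invoking the Hermite identity $H_{2k}'(x)=4k\,H_{2k-1}(x)$ (your $H_n'(y)=2nH_{n-1}(y)$ with $n=2k$), and your coefficient simplification $\frac{4k}{4^k k!}=\frac{1}{4^{k-1}(k-1)!}$ recovers the stated $\cR_M$. The only difference is that you spell out the (routine) justification of differentiating under the integral, which the paper omits.
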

\begin{proof}
Formula \eqref{gradL} can be obtained by direct differentation of \eqref{zwint}, { keeping in mind the identity \(H_{2k}^{\prime}(x)=4 k H_{2k-1}(x)\) (see \cite[(14), p. 193]{emo}).}
\end{proof}
For example, for \(M=1\)
\[
 \cR_1(x,{ r})=\partial_{x}\cQ_1(x,{ r})=0\,,\quad
\nabla \cL(\ee^{-|\cdot|^2})(\bx)=-\frac{\bx}{2\pi^{3/2}} \itg_0^\infty \frac{\ee^{-\frac{|\bx|^2}{1+ r}}}{(1+r)^{5/2}} dr
\]
and 
\[
\nabla P(\bx,t)\approx \nabla { \cL_M F} (\bx,t)=-\frac{h\sqrt{\cD}}{{ 2} (\pi \cD)^{3/2}} \sum_{\bm\in\Z^3} F(h\bm,t)\frac{\bx-h\bm}{h\sqrt{\cD}}\itg_0^\infty \frac{\ee^{-\frac{|\bx-h\bm|^2}{h^2\cD(1+r)}}}{(1+r)^{5/2}}dr.
\]
The quadrature of the integrals  \eqref{zwint} and \eqref{gradL}  with certain quadrature weights \(\omega_p\) and nodes \(r_p\) gives the separated representations
\begin{equation}\label{P}
P(\bx,t)\approx \frac{h^2\cD}{4(\pi\cD)^{3/2}} \sum_{\bm\in\Z^3} F(h\bm,t)  \sum_p \omega_p{\ee^{-\frac{|\bx-h\bm|^2}{h^2\cD(1+r_p)}}}\prod_{j=1}^3\cQ_M(\frac{x_j-hm_j}{h\sqrt{\cD}},r_p)\,,
\end{equation}
\begin{multline}\label{gradP}
\partial_{x_i} P(\bx,t)\approx { - \frac{h\sqrt{\cD}}{2 (\pi \cD)^{3/2}} \sum_{\bm\in\Z^3}F(h\bm,t) \sum_p \omega_p
\Big(
\frac{x_i-hm_i} {h\sqrt{\cD}}{\ee^{-\frac{|\bx-h\bm|^2}{h^2\cD(1+r_p)}}}\prod_{j=1}^3\cQ_M(\frac{x_j-hm_j} {h\sqrt{\cD}},r_p)}
\\+
{\ee^{-\frac{|\bx-h\bm|^2}{h^2\cD(1+r_p)}}}\cR_M(\frac{x_i-hm_i} {h\sqrt{\cD}},r_p) \prod_{\substack{j=1\\ j\neq i}}^{3}\cQ_M(\frac{x_j-hm_j} {h\sqrt{\cD}},r_p)
\Big)\,.
\end{multline}
The approximation formulas \eqref{P} and \eqref{gradP} are very efficient if \(F(\bx,t)={ -}\nabla\cdot\ff(\bx,t)\)  has separated representation, i.e. for given accuracy \(\e\) it can be represented as a sum of product of vectors in dimension \(1\)
\[
F(\bx,t)=\sum_{s=1}^S a_s\prod_{j=1}^3 F_j^{(s)}(x_j,t)+\cO(\e).
\]
Then an approximate value of \(P(h\bk,t)\) and \(\nabla P(h\bk,t)\)  at the point of a uniform grid can be computed by the sum of products of one-dimensional convolutions
\begin{equation}\label{Phk}
P(h\bk,t)
\approx   \frac{h^2\cD}{4(\pi\cD)^{3/2}} \sum_{s=1}^S a_s  \sum_p \omega_p \prod_{j=1}^3 \Sigma_j^{(s)}(k_j,r_p,t)\,,
\end{equation}
\begin{multline} \label{gradPhk}
\partial_{x_i} P(h\bk,t)\approx \frac{h\sqrt{\cD}}{4(\pi\cD)^{3/2}}  \sum_{s=1}^S a_s  \sum_p \omega_p  \Big(T_i^{(s)}(k_i,r_p,t)\prod_{\substack{j=1\\ j\neq i}}^3 \Sigma_j^{(s)}(k_j,r_p,t)
\\+R_i^{(s)}(k_i,r_p,t)\prod_{\substack{j=1\\ j\neq i}}^3 \Sigma_j^{(s)}(k_j,r_p,t)
\Big),\quad i=1,2,3
\end{multline}
with the one-dimensional convolutions
\begin{eqnarray*}
 \Sigma_j^{(s)}(k,r,t)=\sum_{m\in\Z} F_j^{(s)}(hm,t)\cQ_M(\frac{k-m}{\sqrt{\cD}},r) \ee^{-\frac{(k-m)^2}{\cD(1+r)}},
\\
 T_j^{(s)}(k,r,t)=-2\sum_{m\in\Z} F_j^{(s)}(hm,t) \frac{k-m}{\sqrt{\cD}}\cQ_M(\frac{k-m}{\sqrt{\cD}},r) \ee^{-\frac{(k-m)^2}{\cD(1+r)}},
\\
 R_j^{(s)}(k,r,t)=\sum_{m\in\Z} F_j^{(s)}(hm,t) \cR_M(\frac{k-m}{\sqrt{\cD}},r) \ee^{-\frac{(k-m)^2}{\cD(1+r)}}.
\end{eqnarray*}

\section{Approximation of \(\bu_2\)}\label{sec3}

In this section we consider the approximation of  the solution \(\bu_2=(u_{21},u_{22},u_{23})\) of \eqref{eq2ter}. Each component \(u_{2j}\), \(j=1,2,3\),  satisfies the non-homogeneous heat equation
\begin{eqnarray}\label{eq2qua}
\frac{\partial u_{2j}}{\partial t} -\nu \Delta u_{2j}=\varphi_j,\quad 
u_{2j}(\bx,0)=0,\quad 
\bx\in\R^3,t>0\,,
\end{eqnarray}
where \(\varphi_j=f_j -\partial_{x_j} P\). Hence,
\[
u_{2j}(\bx,t)=\cH \varphi_j(\bx,t)
\]
with
\begin{equation*}\label{potential3}
\cH \varphi_j(\bx,t)
=\itg_0^t   (\cP\, \varphi_j(\cdot,s))(\bx,t-s) ds.
\end{equation*}
We replace the density \(\varphi_j\) 
 by the quasi-interpolant on the rectangular grid
$\{(h \bm,\tau i)\}$ 
\begin{equation*}\label{fht}
\cM_{h,\tau} \varphi_j(\bx,t) =\frac{1}{{\cD_0^{1/2} \cD^{3/2}}}\sum_{\substack
{i\in\Z\\\bm\in\Z^3}}\varphi_j(h \bm,\tau i)\, {\h}_{2M}
\left(\frac{t-\tau  i}{\tau \sqrt{\cD_0}}\right) {\widetilde\h}_{2M} \left(\frac{\bx-h \bm}{h \sqrt{\cD}}\right) \,.
\end{equation*}
Here $\tau$ and $h$ are the steps; $\cD_0$ and $\cD$ are positive fixed parameters; 
$\widetilde{\h}_{2M}$ and $\h_{2M}$ are the generating functions given in \eqref{basis}. The sum \(\cM_{h,\tau} \varphi_j\) approximates \(\varphi_j\) with the error
\[
|\varphi_j(\bx,t)-\cM_{h,\tau} \varphi_j(\bx,t)|=\cO((h\sqrt{\cD})^{2M}+(\tau\sqrt{\cD_0})^{2M})+\e
\]
for all \((\bx,t)\in \R^3\times [0,T], T>0\) and the sum
\[
\cH \cM_{h,\tau} \varphi_j(\bx,t)=
\frac{1}{\cD_0^{1/2} \cD^{3/2}}\sum_{\substack
{i\in\Z\\\bm\in\Z^3}} {\varphi_j}(h
\bm,\tau i) 
\itg_0^t   {\h_{2M}}\Big(\frac{s-\tau i}{\tau \sqrt{\cD_0}}\Big)
\, \cP \widetilde\eta_{2M}  \,
\Big(\frac{\bx-h\bm}{h\sqrt{\cD}},\frac{t-s}{h^2{\cD}}\Big) ds
\]
approximates \(\cH \varphi_j\) in \(\R^3\times [0,T]\) with the error estimate
\[
|\cH \cM_{h,\tau} \varphi_j(\bx,t)-\cH  \varphi_j(\bx,t)|=\cO((h\sqrt{\cD})^{2M}+(\tau\sqrt{\cD_0})^{2M})+\epsilon
\]
{ (see \cite[Theorem. 2.1]{LMS2014}).}
Here
\[
{\varphi_j}(h\bm,\tau i) =f_j(h\bm,\tau i) -\partial_{x_j} P(h\bm,\tau i)
\]
and \(\partial_{x_j} P(h\bm,\tau i)\) is given in \eqref{gradPhk}.

At the point of a rectangular grid \(\{(h\bk,\tau \ell)\}\) we get 
\begin{equation}\label{approxheat}
\cH \cM_{h,\tau} \varphi_j(h\bk,\tau \ell)=
\frac{1}{\cD_0^{1/2} \cD^{3/2}}\sum_{\substack
{i\in\Z\\\bm\in\Z^3}}  {\varphi_j}(h
\bm,\tau i) 
K_{M}(h\bk,h\bm,\tau\ell,\tau i)
\end{equation}
where 
\[
K_{M}(h\bk,h\bm,\tau\ell,\tau i)=\itg_0^{\tau \ell}  {\h_{2M}}\Big(\frac{\sigma-\tau i}{\tau \sqrt{\cD_0}}\Big)
\, \cP \widetilde\eta_{2M}  \,
\Big(\frac{\bk-\bm}{\sqrt{\cD}},\frac{\tau \ell-\sigma}{h^2{\cD}}\Big) d\sigma.
\]
In view of theorem \ref{3.1}
\begin{multline*}
K_{M}(h\bk,h\bm,\tau\ell,\tau i)=\\
{ \pi^{-3/2}}\itg_0^{\tau \ell}  {\h_{2M}}\Big(\frac{\sigma-\tau i}{\tau \sqrt{\cD_0}}\Big)
\prod_{j=1}^3
\ee^{-(k_j-m_j)^2/(\cD(1+4\nu \frac{\tau \ell-\sigma}{h^2\cD}))}
\cQ_M(\frac{k_j-m_j}{\sqrt{\cD}},4\nu \frac{\tau \ell-\sigma}{h^2{\cD}})d\sigma
\end{multline*}
and the integrals cannot be taken analytically. The computation of the sum \eqref{approxheat} involves additionally an integration, which must be approximated by an efficient quadrature rule with certain quadrature weights \(\omega_p\) and nodes \(r_p\)
\begin{multline*}
K_{M}(h\bk,h\bm,\tau\ell,\tau i)\approx
\\ { \pi^{-3/2}}\sum_p \omega_p \  {\h_{2M}}\Big(\frac{\tau_p-\tau i}{\tau \sqrt{\cD_0}}\Big)
\prod_{j=1}^3
\ee^{-(k_j-m_j)^2/(\cD(1+4\nu \frac{\tau \ell-\tau_p}{h^2\cD}))}
\cQ_M(\frac{k_j-m_j}{\sqrt{\cD}},4\nu \frac{\tau \ell-\tau_p}{h^2{\cD}})\,.
\end{multline*}

If \(\Phi=(\varphi_1,\varphi_2,\varphi_3)\) admits a separate representation, then the sum \eqref{approxheat} gives an efficiently computable high order approximation of the initial value problem \eqref{eq2qua} based on the computation of one-dimensional sums.

\section{Implementation and numerical experiments}\label{sec4}

\subsection{Homogeneous heat equation}

We assume \(\ff=\bf0\) and
\(
\bg(\bx)=\nabla \times (0,0,\ee^{-|\bx|^2})=(-2x_2\ee^{-|\bx|^2},2x_1 \ee^{-|\bx|^2},0).
\)
Then 
\(
\nabla \cdot \bg(\bx)=0
\)
and the solution of \eqref{heat} -\eqref{heat2} is provided by \(\bu_2\equiv 0\), \(P\equiv0\) and \(\bu=\bu_1=(u_{11},u_{12},u_{13})\) with
\[
u_{11}(\bx,t)=(\cP g_1)(\bx,t)=\frac{-2}{(4\pi\nu t)^{3/2}}\itg_{\R^3}y_2 \ee^{-\frac{|\by-\bx|^2}{4\nu t}}\ee^{-|\by|^2}d\by=-2 {x_2} \frac{\ee^{-\frac{|\bx|^2}{1+4\nu t}}}{(4 \nu  t+1)^{5/2}}\,,
\]
\[
u_{12}(\bx,t)=(\cP g_2)(\bx,t)=\frac{2}{(4\pi\nu t)^{3/2}}\itg_{\R^3}y_1 \ee^{-\frac{|\by-\bx|^2}{4\nu t}}\ee^{-|\by|^2}d\by=2 {x_1} \frac{\ee^{-\frac{|\bx|^2}{1+4\nu t}}}{(4 \nu  t+1)^{5/2}}\,,
\]
\[
u_{13}(\bx,t)=(\cP g_3)(\bx,t)=0\,.\]
We provide results of some experiments which show accuracy and numerical convergence order of the { approximation formulas \eqref{hg}.} In Tables \ref{T1}-\ref{T1bis} we compare the exact solution \(\cP g_1\) with
the coefficient of kinematic viscosity  \(\nu\) equals to \(1\) 
and the approximate solution in \eqref{hg} at one fixed point, for \(M=1,2,3,4\) and  different values of \(h\). 
{We choose \(\cD=4\) to have the saturation error comparable with the double precision rounding errors. 
Numerical experiments show that the predicted convergence order is obtained and for \(M=4\) and small \(h\) the saturation error is reached.}

 \begin{table}[h]
{\small 
\begin{center}
\begin{tabular}{c|cc|cc|cc|cc} 
& \multicolumn{2}{c|}{$M=1$}&\multicolumn{2}{c}{$M=2$}&\multicolumn{2}{c}{$M=3$}&\multicolumn{2}{c}{$M=4$}\\
$h^{-1}$&  error &      rate&  error &      rate&  error &      rate&  error &      rate          \\\midrule
10&0.235E-03&&0.133E-05&& 0.742E-08&&0.367E-10&\\
20&0.591E-04 &   1.99&0.841E-07 &   3.98&0.117E-09 &   5.98&0.145E-12 &   7.98\\
40&0.148E-04 &   1.99&0.841E-09 &   3.99&0.184E-11 &   5.99&0.586E-15 &   7.94\\
80&0.370E-05 &   1.99&0.329E-09 &   3.99&0.288E-13 &   5.99&0.694E-17&\\
160&0.925E-06 &   1.99&0.206E-10 &   3.99&0.545E-15 &   5.72&0.902E-16&\\
   \bottomrule
   \end{tabular}
   \caption{Absolute error and rate of convergence for $\cP g_1$ in \(\bx=(1.2,1.2,1.2), t=1\)  using $\cP_M g_1$.}\label{T1}
    \end{center}
    }
    
    {\small 
\begin{center}
\begin{tabular}{c|cc|cc|cc|cc} 
& \multicolumn{2}{c|}{$M=1$}&\multicolumn{2}{c}{$M=2$}&\multicolumn{2}{c}{$M=3$}&\multicolumn{2}{c}{$M=4$}\\
$h^{-1}$&  error &      rate&  error &      rate&  error &      rate&  error &      rate          \\\midrule
10& 0.540E-03& &0.321E-05 & &0.175E-07 & &0.809E-10 & \\
20& 0.136E-03 &   1.98 & 0.202E-06 &   3.98 & 0.276E-09 &   5.98 &0.319E-12 &   7.98 \\
40& 0.341E-04 &   1.99& 0.127E-07 &   3.99 & 0.432E-11 &   5.99 & 0.128E-14 &   7.96 \\
80& 0.852E-05 &   1.99 & 0.791E-09 &   3.99 & 0.676E-13 &   5.99 &0.763E-16  & \\
160& 0.213E-05 &   1.99 & 0.494E-10 &   3.99 & 0.117E-14 &   5.85 & 0.125E-15& \\
   \bottomrule
   \end{tabular}
   \caption{Absolute error and rate of convergence for $\cP g_1$ in \(\bx=(0,1.6,0), t=1\)  using $\cP_M g_1$.}\label{T1bis}
    \end{center}
    }

     \end{table}

\bigskip\bigskip

\subsection{Numerical results for  the approximation of \(P\) and \(\nabla P\) }

We assume \(\ff(\bx,t)={ 2} t\, \bx \ee^{-|\bx|^2}=(2t\,x_1\ee^{-|\bx|^2},2t\,x_2\ee^{-|\bx|^2},2t\,x_3\ee^{-|\bx|^2})\) and \(\bg(\bx)={(0,0,0)}\).
Hence,  
\[
\nabla\cdot \ff=t(6-4|\bx|^2)\ee^{-|\bx|^2}.
\]
The unique solution of \eqref{eq1}, \eqref{eq2}, \eqref{eq3} is
\[
\bu(\bx,t)={\bf 0},\quad P(\bx,t)=-t\,\ee^{-|\bx|^2}.
\]
We report on the absolute errors and the approximation rates for the harmonic potential \(P=\cL F\) ( Tables  \ref{T3} and \ref{T3bis}) and {\(\partial_{x_2} P=2tx_2 \ee^{-|\bx|^2}\)} (Tables \ref{T5} and \ref{T6}) in a fixed point. The approximate values are computed by the  cubature formulas { given in} \eqref{Phk} having the approximation order \(\cO(h^{2M}+h^2\ee^{-\cD\pi^2})\), and   { in} \eqref{gradPhk} having the approximation order \(\cO(h^{2M}+h\ee^{-\cD\pi^2})\), respectively, for \(M=1,2,3,4\).  { We used uniform grids size  \(h=0.1\cdot 2^{1-k}\), \(k=1,...,5\) and we choose the parameter \(\cD=5\).} Following \cite{Mo} the one-dimensional integrals  { in} \eqref{zwint} and \eqref{gradL} are transformed to integrals over \(\R\) with integrands decaying doubly exponentially by making the substitutions
\[
t=\ee^\xi,\quad \xi=a (\tau+\ee^\tau),\quad \tau=b (u-\ee^{-u})
\]
with certain positive constants \(a\) and \(b\). The computation is based on the classical trapezoidal rule with step size \(\kappa\),
exponentially converging for rapidly decaying smooth functions on the real line.
In our computations we assumed \(a=5\), \(b=6\), \(\kappa=0.0009\) and \(8\cdot 10^2\) points in the quadrature formula in order to reach the saturation error with the approximation formula of order  \(N=8\) .

 The numerical results show that higher order cubature formulas gives essentially better approximation than the second order formulas and the predicted convergence order is reached.

 \begin{table}[p]
{\small 
\begin{center}
\begin{tabular}{c|cc|cc|cc|cc} 
& \multicolumn{2}{c|}{$M=1$}&\multicolumn{2}{c}{$M=2$}&\multicolumn{2}{c}{$M=3$}&\multicolumn{2}{c}{$M=4$}\\
$h^{-1}$&  error &      rate&  error &      rate&  error &      rate&  error &      rate          \\\midrule
10&0.188D-02&& 0.718D-04&&0.731D-05&&0.960D-08&\\
20& 0.470D-03  & 2.00&0.462D-05  &   3.95&0.143D-06  &   5.68&0.783D-10  &    6.93\\
40&0.117D-03  &     2.00&0.291D-06  &    3.99&0.236D-08   &   5.92&0.352D-12  &    7.79\\
80&0.293D-04    &   2.00&0.182D-07  & 3.99&0.373D-10 &   5.98&0.116D-14  &   8.24\\
160&0.733D-05 &   2.00&0.114D-08  &   3.99&0.585D-12  &   5.99& 0.218D-13&\\
   \bottomrule
   \end{tabular}
   \caption{Absolute error and rate of convergence for $P(\bx,t)$ in \(\bx=(1.2,1.2,1.2), t=1\)  using $\eqref{Phk}$.}\label{T3}
    \end{center}
    }
        
  {\small 
\begin{center}
\begin{tabular}{c|cc|cc|cc|cc} 
& \multicolumn{2}{c|}{$M=1$}&\multicolumn{2}{c}{$M=2$}&\multicolumn{2}{c}{$M=3$}&\multicolumn{2}{c}{$M=4$}\\
$h^{-1}$&  error &      rate&  error &      rate&  error &      rate&  error &      rate          \\\midrule
10&0.386D-02&& 0.811D-04&& 0.127D-04 && 0.656D-06&\\
20& 0.101D-02  &   1.93&0.633D-05  &   3.68&0.223D-06  &  5.83& 0.284D-08    &   7.85\\
40& 0.255D-03  &   1.98&0.417D-06 &   3.92& 0.358D-08  &  5.96& 0.114D-10  &   7.96\\
80& 0.640D-04  &   1.99&0.264D-07  &   3.98&0.564D-10   &   5.99& 0.445D-13  &   7.99\\
160&0.160D-04 &  1.99& 0.165D-08  &   3.99& 0.882D-12  & 5.99& 0.194D-15 &   \\   \bottomrule
   \end{tabular}
   \caption{Absolute error and rate of convergence for $P(\bx,t)$ in \(\bx=(0,1.6,0), t=1\)  using $\eqref{Phk}$.}\label{T3bis}
    \end{center}
    }
  \end{table}
     
      \begin{table}[h!]
{\small 
\begin{center}
\begin{tabular}{c|cc|cc|cc|cc} 
& \multicolumn{2}{c|}{$M=1$}&\multicolumn{2}{c}{$M=2$}&\multicolumn{2}{c}{$M=3$}&\multicolumn{2}{c}{$M=4$}\\
$h^{-1}$&  error &      rate&  error &      rate&  error &      rate&  error &      rate          \\\midrule
10& 0.279D-02&& 0.163D-03&& 0.584D-05&&0.140D-06 &\\
20&0.719D-03  &  1.96& 0.107D-04 &    3.92 & 0.961D-07  &    5.92 & 0.543D-09  &    8.01\\
40& 0.181D-03  &  1.99& 0.678D-06 &    3.98&0.152D-08   &    5.98 & 0.211D-11 &    8.01 \\
80&0.454D-04 &    2.00&  0.425D-07  &    4.00& 0.238D-10 &    6.00 & 0.826D-14  &    8.00\\
160& 0.113D-04&    2.00&  0.266D-08  &    4.00& 0.373D-12  & 6.00 &  0.139D-16 &\\
   \bottomrule
   \end{tabular}
   \caption{Absolute error and rate of convergence for $\partial_{x_2}P(\bx,t)$ in \(\bx=(1.2,1.2,1.2),\> t=1\)  using $\eqref{gradPhk}$.}\label{T5}
    \end{center}
    }
{\small 
\begin{center}
\begin{tabular}{c|cc|cc|cc|cc} 
& \multicolumn{2}{c|}{$M=1$}&\multicolumn{2}{c}{$M=2$}&\multicolumn{2}{c}{$M=3$}&\multicolumn{2}{c}{$M=4$}\\
$h^{-1}$&  error &      rate&  error &      rate&  error &      rate&  error &      rate          \\\midrule
10& 0.175D-04 && 0.319D-03 && 0.390D-05&& 0.141D-05 &\\
20&0.750D-02   &    1.98& 0.195D-04 &    4.03 & 0.382D-06   &    5.84 & 0.686D-08   &    7.68 \\
40& 0.188D-02  &    2.00& 0.121D-05 &    4.01 & 0.613D-08  &    5.96 & 0.283D-10 &    7.92\\
80& 0.471D-03  &    2.00&  0.753D-07  &    4.00 & 0.964D-10   &    5.99 &0.112D-12   &    7.97 \\
160& 0.118D-03 &    2.00 &0.470D-08  &    4.00 & 0.151D-11  &    6.00  & 0.416D-15  &  \\  
   \bottomrule
   \end{tabular}
   \caption{Absolute error and rate of convergence for $\partial_{x_2}P(\bx,t)$ in \(\bx=(0,1.6,0), t=1\)  using $\eqref{gradPhk}$.}\label{T6}
    \end{center}
    }
     \end{table}

\subsection{ Non-Homogeneous heat equation}\label{5.3}

In Tables \ref{T7} and \ref{T7bis} we report on the absolute errors and the approximation rates  
for the solution \eqref{eq2ter}  { with \(\nu=1\)}. We assumed  \(\Phi=(\varphi_1,\varphi_2,\varphi_3)=(\ee^{-|\bx|^2} (1+6t-4|\bx|^2 t),0,0)\) which gives the exact  {solution } \(\bu_2=(t\ee^{-|\bx|^2},0,0)\). The approximations have been computed by 
\(\cH\!\! \cM_{h,\tau}\)  in \eqref{approxheat} for \(M=1,2,3,4\), with the parameters \(\cD=\cD_0=4\).

Making the substitution
\begin{equation*}\label{submo}
\sigma=\frac{\tau \ell}{2} \Big(1+\tanh \big( \frac{\pi}{2} \sinh \x \big) \Big)
= \frac{\tau \ell}{1+\ee^{-\pi \sinh \x}} 
\end{equation*}
introduced in \cite{Mo},
$K_M$ transforms to an  integral over $\R$ with 
doubly exponentially decaying integrand. Then we apply the classical trapezoidal rule  with the parameters \(\kappa=0.002\) and \(2\cdot 10^3\) terms  in the quadrature formula in order to reach the saturation error with the approximation formula of order  \(N=8\) .

 \begin{table}[h]
{\small 
\begin{center}
\begin{tabular}{cc|cc|cc|cc|cc} 
&& \multicolumn{2}{c|}{$M=1$}&\multicolumn{2}{c}{$M=2$}&\multicolumn{2}{c}{$M=3$}&\multicolumn{2}{c}{$M=4$}\\
$h^{-1}$&$\tau^{-1}$&  error &      rate&  error &      rate&  error &      rate&  error &      rate          \\\midrule
10&40& 0.151E-02&&0.463E-04&& 0.771E-06 &&0.497E-08&\\
20&80&0.376E-03 &   2.00&0.296E-05 &   3.97&0.118E-07 &   6.02&0.333E-10 &   7.22\\
40&160&0.938E-04 &   2.00&0.186E-06 &   3.99&0.183E-09 &   6.00&0.146E-12 &   7.84\\
80&320&0.234E-04 &   2.00&0.117E-07 &   3.99&0.286E-11 &   6.00&0.671E-15 &   7.76\\
160&640&0.586E-05 &   2.00&0.729E-09 &   3.99& 0.445E-13 &   6.00& 0.121E-15&\\
   \bottomrule
   \end{tabular}
   \caption{Absolute error and rate of convergence for the approximation of $\cH \varphi_1(\bx,t)$ defined in \eqref{potential2}  in \(\bx=(1.2,1.2,1.2), t=1\)  using \eqref{approxheat}.}\label{T7}
    \end{center}
    }
    
    {\small 
\begin{center}
\begin{tabular}{cc|cc|cc|cc|cc} 
&& \multicolumn{2}{c|}{$M=1$}&\multicolumn{2}{c}{$M=2$}&\multicolumn{2}{c}{$M=3$}&\multicolumn{2}{c}{$M=4$}\\
$h^{-1}$&$\tau^{-1}$&  error &      rate&  error &      rate&  error &      rate&  error &      rate          \\\midrule
10&40&0.313E-02&&0.552E-04&&0.671E-05&& 0.276E-06&\\
20&80&0.810E-03 &   1.95&0.411E-05 &   3.75& 0.115E-06 &   5.87&0.117E-08 &   7.88\\
40&160&0.204E-03 &   1.99&0.268E-06 &   3.94&0.184E-08 &   5.97& 0.468E-11 &   7.97\\
80&320&0.512E-04 &   1.99&0.169E-07 &   3.98&0.289E-10 &   5.99&0.183E-13 &   7.99\\
160&640&0.128E-04 &   1.99&0.106E-08 &   3.99&0.452E-12 &   5.99&0.389E-15&\\
   \bottomrule
   \end{tabular}
   \caption{Absolute error and rate of convergence for the approximation of $\cH \varphi_1(\bx,t)$ defined in \eqref{potential2}  in  \(\bx=(0,1.6,0), t=1\)  using \eqref{approxheat}.}\label{T7bis}
    \end{center}
    }

     \end{table}   
     
\newpage

\section*{Acknowledgement}
The second author was supported by the RUDN University Program 5-100.

\end{document}